\documentclass[a4paper,11pt]{article}

\usepackage{makeidx}         
\usepackage{graphicx}        
\usepackage{psfrag}          
\usepackage{multicol}        
\usepackage{epstopdf}
\usepackage{amsmath}
\usepackage{amssymb}
\usepackage{amsfonts}
\usepackage{amsthm}
\usepackage{latexsym}
\usepackage[T1]{fontenc}
\usepackage{subfigure}       
\usepackage{float}
\usepackage{mathrsfs}
\usepackage{mathptmx}        
\usepackage{xspace}

\usepackage{tikz}
\usetikzlibrary{patterns}
\newlength{\hatchspread}
\newlength{\hatchthickness}
\newlength{\hatchshift}
\newcommand{\hatchcolor}{}
\tikzset{hatchspread/.code={\setlength{\hatchspread}{#1}},
         hatchthickness/.code={\setlength{\hatchthickness}{#1}},
         hatchshift/.code={\setlength{\hatchshift}{#1}},
         hatchcolor/.code={\renewcommand{\hatchcolor}{#1}}}
\tikzset{hatchspread=3pt,
         hatchthickness=0.4pt,
         hatchshift=0pt,
         hatchcolor=black}
\pgfdeclarepatternformonly[\hatchspread,\hatchthickness,\hatchshift,\hatchcolor]
   {custom north west lines}
   {\pgfqpoint{\dimexpr-2\hatchthickness}{\dimexpr-2\hatchthickness}}
   {\pgfqpoint{\dimexpr\hatchspread+2\hatchthickness}{\dimexpr\hatchspread+2\hatchthickness}}
   {\pgfqpoint{\dimexpr\hatchspread}{\dimexpr\hatchspread}}
   {
    \pgfsetlinewidth{\hatchthickness}
    \pgfpathmoveto{\pgfqpoint{0pt}{\dimexpr\hatchspread+\hatchshift}}
    \pgfpathlineto{\pgfqpoint{\dimexpr\hatchspread+0.15pt+\hatchshift}{-0.15pt}}
    \ifdim \hatchshift > 0pt
      \pgfpathmoveto{\pgfqpoint{0pt}{\hatchshift}}
      \pgfpathlineto{\pgfqpoint{\dimexpr0.15pt+\hatchshift}{-0.15pt}}
    \fi
    \pgfsetstrokecolor{\hatchcolor}
    \pgfusepath{stroke}
   }

\pgfdeclarepatternformonly[\hatchspread,\hatchthickness,\hatchshift,\hatchcolor]
   {custom north east lines}
   {\pgfqpoint{\dimexpr-2\hatchthickness}{\dimexpr-2\hatchthickness}}
   {\pgfqpoint{\dimexpr\hatchspread+2\hatchthickness}{\dimexpr\hatchspread+2\hatchthickness}}
   {\pgfqpoint{\dimexpr\hatchspread}{\dimexpr\hatchspread}}
   {
    \pgfsetlinewidth{\hatchthickness}
    \pgfpathmoveto{\pgfqpoint{\dimexpr\hatchshift-0.15pt}{-0.15pt}}
    \pgfpathlineto{\pgfqpoint{\dimexpr\hatchspread+0.15pt}{\dimexpr\hatchspread-\hatchshift+0.15pt}}
    \ifdim \hatchshift > 0pt
      \pgfpathmoveto{\pgfqpoint{-0.15pt}{\dimexpr\hatchspread-\hatchshift-0.15pt}}
      \pgfpathlineto{\pgfqpoint{\dimexpr\hatchshift+0.15pt}{\dimexpr\hatchspread+0.15pt}}
    \fi
    \pgfsetstrokecolor{\hatchcolor}
    \pgfusepath{stroke}
   }

\newcommand{\bP}{\mathbb{P}}
\newcommand{\bN}{\mathbb{N}}

\newcommand{\cP}{\mathcal{P}}
\newcommand{\cX}{\mathcal{X}}
\newcommand{\cD}{\mathcal{D}}

\newcommand{\cS}{\mathcal{S}}
\newcommand{\cT}{\mathcal{T}}

\newcommand{\msa}{{\sf a}}
\newcommand{\msb}{{\sf b}}
\newcommand{\msc}{{\sf c}}
\newcommand{\mse}{{\sf e}}
\newcommand{\msf}{{\sf f}}
\newcommand{\msp}{{\sf p}}
\newcommand{\msq}{{\sf q}}

\newcommand{\mss}{{\sf s}}

\newcommand{\msw}{{\sf w}}

\newcommand{\mC}{{\sf C}}
\newcommand{\mD}{{\sf D}}

\newcommand{\mK}{{\sf K}}
\newcommand{\mM}{{\sf M}}
\newcommand{\mN}{{\sf N}}
\newcommand{\mT}{{\sf T}}
\newcommand{\mV}{{\sf V}}
\newcommand{\mO}{{\sf O}}
\newcommand{\mS}{{\sf S}}

\newcommand{\mW}{{\sf W}}

\newcommand{\tautoolbox}{\texttt{Tau\xspace Toolbox \xspace}}
\newcommand{\fred}{\texttt{fred \xspace}}
\newcommand{\volt}{\texttt{volt \xspace}}
\newcommand{\orthval}{\texttt{orthoval \xspace}}
\newcommand{\polyval}{\texttt{polyval \xspace}}
\newcommand{\conv}{\texttt{conv}}
\newcommand{\matlab}{\textsc{Matlab}\xspace}
\newtheorem{theorem}{Theorem}
\newtheorem{lemma}[theorem]{Lemma}
\newtheorem{proposition}[theorem]{Proposition}
\newtheorem{example}[theorem]{Example}

\usepackage{geometry}
\geometry{margin=1.3in}
\allowdisplaybreaks

\begin{document}

\title{Spectral Lanczos' tau method for systems of nonlinear integro-differential equations\thanks{This work was partially supported by CMUP (UID/MAT/00144/2013), which is funded by FCT (Portugal) with national (MEC) and European structural funds (FEDER) under the partnership agreement PT2020.}}
\author{P. B. Vasconcelos\thanks{Center of Mathematics at University of Porto \& Economics Faculty at University of Porto, Portugal (pjv@fep.up.pt)} \and J. Matos\thanks{Center of Mathematics at University of Porto \& Politechnic School of Engeneering Porto, Portugal (jma@isep.ipp.pt} \and M. S. Trindade\thanks{Center of Mathematics at University of Porto, Portugal (marcelo.trindade.fc.up.pt)}}
\date{}
\maketitle

\abstract{In this paper an extension of the spectral Lanczos' tau method to systems of nonlinear integro-differential equations is proposed. This extension includes (i) linearization coefficients of orthogonal polynomials products issued from nonlinear terms and (ii) recursive relations to implement matrix inversion whenever a polynomial change of basis is required and (iii) orthogonal polynomial evaluations directly on the orthogonal basis. All these improvements ensure numerical stability and accuracy in the approximate solution. 
Exposed in detail, this novel approach is able to significantly outperform numerical approximations with other methods as well as different tau implementations. Numerical results on a set of problems illustrate the impact of the mathematical techniques introduced.}
\smallskip

\noindent\textit{Keywords: } spectral (tau) method, nonlinear systems of differential equations

\noindent\textit{Mathematics Subject Classification}: 65L60, 65H10, 68N01

\section{Introduction}\label{sec:intro}

The tau method is a spectral method, originally developed by
Lanczos in the 30's \cite{La38} that delivers polynomial approximations to the solution of differential problems. The method tackles both initial and boundary value problems with ease. It is a spectral method thus ensuring excellent error properties, whenever the solution is smooth.  

Initially developed for linear differential problems with polynomial coefficients, it has been used to solve broader mathematical formulations: functional coefficients, nonlinear differential and integro-differential equations. Several studies applying the tau method have been performed to approximate the solution of differential linear and non-linear equations \cite{CrRu83, LiPa99}, partial differential equations \cite{MaRoPa04, OrDi87} and integro-differential equations \cite{AbTa09, MaArSh05}, among others. Nevertheless, in all these works the tau method is tuned for the approximation of specific problems and not offered as a general purpose numerical tool.
   
A barrier to use the method as a general purpose technique has been the lack of automatic mechanisms to translate the integro-differential problem by an algebraic one. Furthermore and most importantly, problems often require high-order polynomial approximations, which brings numerical instability issues. The tau method inherits numerical instabilities from the large condition number associated with large matrices representing algebraically the actions of the integral, differential or integro-differential operator on the coefficients of the series solution. 

In this work numerical instabilities related with high-order polynomial approximations, in the tau method, are tackled allowing for the deployment of a general framework to solve integro-differential problems. We aim at contributing to broadcast the tau method for the scientific community and industry as it provides polynomials solutions with good error properties.
The \tautoolbox \cite{TrMaVa16} is a \matlab tool to solve integro-differential problems by the tau method. It aggregates all contributions available, enhances the use of the method by developing more stable algorithms and offers efficient implementations.  

\section{Preliminaries}\label{sec:preliminaries}

We begin by introducing the notation for the algebraic formulation of the tau method. 

Assume throughout that $\cP=[P_0,P_1,\ldots]\subseteq \bP$ is an orthogonal basis for the polynomials space $\bP$ of any non-negative integer degree, $\cX=[1,x,x^2,\ldots]\in\bP$ the power basis for $\bP$. Furthermore, consider that $y(x)=\sum_{i\geq 0} a_iP_i = \cP \msa$ is a formal series with coefficients $\msa=[a_0,a_1,\ldots]^T$. For the power basis, $y(x)=\sum_{i\geq 0} a_ix^i = \cX \msa_{\cX}$.

Lemma \ref{lemma:basischange} illustrates matrices $\mM$, $\mN$ and $\mO$ that set, respectively, polynomial multiplication, differentiation and integration into algebraic operations.

\begin{lemma}\label{lemma:basischange}
Let $\mV$ be the triangular matrix such that $\cP=\cX\mV$ and $\msa = \mV^{-1}\msa_{\cX}$. Then $x y = \cP \mM \msa$, $\ \frac{d}{dx} y = \cP \mN \msa$ and $\int ydx = \cP \mO \msa$ where  
\begin{equation}\label{matM}
\mM=\mV^{-1}\mM_{\cX} \mV, 
\quad \mM_{\cX} =
\begin{bmatrix}
0 &   &   &        & \\
1 & 0 &   &        & \\
  & 1 & 0 &        & \\
  &   & \ddots & \ddots
\end{bmatrix},
\end{equation}
\begin{equation}\label{matN}
\mN=\mV^{-1} \mN_{\cX} \mV, 
\quad \mN_{\cX} =
\begin{bmatrix}
0 & 1   &   &         &        \\
& 0 & 2 &   &         &        \\
&   & 0 & 3 &         &        \\
&   &   &  \ddots  & \ddots
\end{bmatrix} 
\end{equation}
and
\begin{equation}\label{matO}
\mO=\mV^{-1} \mO_{\cX} \mV,
\quad \mO_{\cX}=
\begin{bmatrix}
0 &             &             &        & \\
1 & 0           &             &        & \\ 
  & \frac{1}{2} & 0           &        & \\
  &             & \ddots & \ddots
\end{bmatrix}. 
\end{equation}

\end{lemma}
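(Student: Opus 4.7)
The plan is to first verify each of the three identities \emph{in the power basis} $\cX$ by direct computation on the formal series $y=\cX\msa_\cX=\sum_{i\ge0} a_i x^i$, and then transport them to the orthogonal basis $\cP$ through the change-of-basis matrix $\mV$.

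First I would note the three elementary identities
\begin{align*}
x\,y &= \sum_{i\ge 0} a_i x^{i+1} = \cX\,\mM_\cX\,\msa_\cX,\\
\tfrac{d}{dx} y &= \sum_{i\ge 1} i a_i x^{i-1} = \sum_{j\ge 0}(j+1)a_{j+1}x^j = \cX\,\mN_\cX\,\msa_\cX,\\
\int y\,dx &= \sum_{i\ge 0}\tfrac{a_i}{i+1} x^{i+1} = \cX\,\mO_\cX\,\msa_\cX,
\end{align*}
which are obtained by reading off the coefficient of $x^j$ on each side and matching it with the $j$-th row of $\mM_\cX$, $\mN_\cX$, $\mO_\cX$ respectively (the shift that sends $a_i\mapsto a_{i-1}$; the upper bidiagonal $(j+1)$-weighted shift; and the lower bidiagonal $\tfrac{1}{i+1}$-weighted shift). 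This step is a routine index-matching exercise and is essentially just the definition of the three matrices $\mM_\cX$, $\mN_\cX$, $\mO_\cX$.

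Next I would use the hypothesis $\cP=\cX\mV$, which after inverting (note that $\mV$ is triangular and, since $\cP$ is assumed to be a basis, has nonzero diagonal and so is formally invertible) gives $\cX=\cP\mV^{-1}$ as well as $\msa_\cX=\mV\msa$. Substituting these into each of the three identities above yields, for multiplication by $x$,
\begin{equation*}
x\,y \;=\; \cX\,\mM_\cX\,\msa_\cX \;=\; \cP\,\mV^{-1}\mM_\cX\mV\,\msa \;=\; \cP\,\mM\,\msa,
\end{equation*}
and analogous one-line computations produce $\tfrac{d}{dx}y=\cP\mN\msa$ and $\int y\,dx=\cP\mO\msa$ with the stated conjugate forms for $\mM$, $\mN$, $\mO$.

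The only genuine obstacle is formal rather than computational: one has to confirm that these manipulations make sense for formal (infinite) series and infinite matrices. This is settled by observing that $\mV$ is triangular, so $\mV^{-1}$ is triangular as well, and each of $\mM_\cX$, $\mN_\cX$, $\mO_\cX$ is banded; thus every row of the products $\mV^{-1}\mM_\cX\mV$, $\mV^{-1}\mN_\cX\mV$, $\mV^{-1}\mO_\cX\mV$ involves only finitely many nonzero terms, and the matrix-vector products acting on any coefficient vector $\msa$ are well-defined entry-by-entry. With this remark in place the three identities follow immediately from the power-basis computations and the change-of-basis substitution.
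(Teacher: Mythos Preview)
Your argument is correct and is exactly the standard one the paper has in mind: the paper's own proof of this lemma merely cites \cite{OrSa81} and \cite{HoSh03}, but the very computation you carry out (establish $\cX\mM_\cX\msa_\cX=xy$, $\cX\mN_\cX\msa_\cX=\frac{d}{dx}y$, $\cX\mO_\cX\msa_\cX=\int y\,dx$ and then conjugate by $\mV$) is reproduced verbatim in the paper's proof of the subsequent Proposition~\ref{prop:DopEop}. Your extra remark on well-definedness for infinite triangular/banded matrices is a welcome clarification the paper omits.
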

\begin{proof}
See \cite{OrSa81} for $\mM$ and $\mN$. It is then easy to extend to $\mO$ (see \cite{HoSh03}).
\end{proof}

The next proposition shows how to translate a linear ordinary differential and integral operators, with polynomial coefficients, into an algebraic representation. 
\begin{proposition} \label{prop:DopEop}
The $\nu$th order, $\nu\in\bN$, ordinary linear differential operator 
$\cD y=\sum_{k=0}^{\nu}p_k\frac{d^k y}{dx^k}$
and the $\gamma$th order, $\gamma\in\bN$, ordinary linear integral operator
$\cS y=\sum_{\ell=0}^{\gamma}p_\ell\left(\int y dx\right)^\ell$
acting on $\bP$, are casted on $\cP$ by, respectively,
\begin{equation}\label{Dopy}
\cD y=\cP \mD \msa,\quad \mD = \sum_{k=0}^{\nu}p_k(\mM) \mN^k 
\end{equation}
and 
\begin{equation}\label{Eopy}
\cS y=\cP \mS \msa,\quad \mS = \sum_{\ell=0}^{\gamma}p_\ell(\mM) \mO^\ell,
\end{equation}
with $p_r(\mM)=\sum_{i=0}^{n_r}p_{r,i}M^i$, $r=k,\ell$ and $n_r \in \mathbb{N}_0$.
\end{proposition}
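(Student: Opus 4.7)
The plan is to reduce the proof to successive applications of Lemma \ref{lemma:basischange}. First, by a straightforward induction on $k$ using the identity $\frac{d}{dx} y = \cP \mN \msa$, I would show that $\frac{d^k y}{dx^k} = \cP \mN^k \msa$ for every $k \in \mathbb{N}_0$. The induction step is transparent: if $\frac{d^{k-1} y}{dx^{k-1}} = \cP \mN^{k-1} \msa =: \cP \msb$, then differentiating once more and reapplying the lemma to the formal series whose coefficient vector is $\msb$ gives $\frac{d^k y}{dx^k} = \cP \mN \msb = \cP \mN^k \msa$. An entirely parallel argument, starting from $\int y \, dx = \cP \mO \msa$, yields $\bigl(\int y \, dx\bigr)^{\ell} = \cP \mO^\ell \msa$.

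Second, iterating $xy = \cP \mM \msa$ in the same fashion gives $x^i y = \cP \mM^i \msa$ for all $i \in \mathbb{N}_0$. Linearity in the coefficients of the multiplying polynomial then shows that whenever $p_r(x) = \sum_{i=0}^{n_r} p_{r,i} x^i$ and $z = \cP \msb$, one has $p_r(x)\, z(x) = \cP \, p_r(\mM) \, \msb$, with $p_r(\mM) = \sum_{i=0}^{n_r} p_{r,i} \mM^i$.

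To close, I would compose the two steps. Applying the polynomial-multiplication identity with $z = \frac{d^k y}{dx^k}$, whose coefficient vector in $\cP$ is $\mN^k \msa$, yields $p_k(x)\, \frac{d^k y}{dx^k} = \cP \, p_k(\mM)\, \mN^k \msa$. Summing over $k = 0,\ldots,\nu$ delivers \eqref{Dopy}. The proof of \eqref{Eopy} is identical with $\mN$ replaced by $\mO$ and $\nu$ by $\gamma$, using the iterate for $\mO^\ell$ in place of $\mN^k$.

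The main pitfall is preserving the correct operator ordering: since $\mM$ and $\mN$ (respectively, $\mM$ and $\mO$) do not commute, any more than $x$ and $\frac{d}{dx}$ do, the factor $p_k(\mM)$ must appear to the \emph{left} of $\mN^k$ in the final product, mirroring the fact that differentiation is performed first and multiplication by the polynomial coefficient $p_k(x)$ second. Beyond this bookkeeping, the argument is purely mechanical and requires no additional ingredient beyond Lemma \ref{lemma:basischange} and the linearity of the assignment $z \mapsto (\text{coefficient vector of } z \text{ in } \cP)$.
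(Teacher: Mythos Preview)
Your proposal is correct and follows essentially the same route as the paper: both arguments rest on the single-step identities $xy=\cP\mM\msa$, $\frac{d}{dx}y=\cP\mN\msa$, and $\int y\,dx=\cP\mO\msa$ from Lemma~\ref{lemma:basischange} and then compose them. The paper re-derives those identities via the similarity transformations with $\mV$ and then writes ``and thus'' for the passage to $\mD$ and $\mS$, whereas you spell out the iteration to $\mN^k$, $\mO^\ell$, $\mM^i$ and the non-commutativity bookkeeping explicitly; this extra care is welcome but does not constitute a different strategy.
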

\begin{proof}
Note that 
\begin{equation*}\label{MN}
\cX\mM_{\cX} \msa_{\cX}=x y,\quad \cX\mN_{\cX} \msa_{\cX}=\frac{d}{dx}y \quad \text{ and } \quad \cX \mO_{\cX} \msa_{\cX}=\int ydx
\end{equation*}
for
$\mM_{\cX}$, $\mN_{\cX}$ and $\mO_{\cX}$ as in Lemma \ref{lemma:basischange}. Then,  
\begin{enumerate}
\item[(i)] $xy=\cX \mM_{\cX} \msa_{\cX}=\cX \mV \mM \mV^{-1} \msa_{\cX}=\cP \mM \msa;$
\item[(ii)] $\frac{d}{dx}y = \cX \mN_{\cX}\msa_{\cX}=\cX \mV \mN \mV^{-1} \msa_{\cX}=\cP \mN \msa$ 
and thus 
$\cD y=\sum_{k=0}^{\nu}p_k\frac{d^k y}{dx^k}=\cP \mD \msa;$
\item[(iii)] $\int ydx = \cX \mO_{\cX} \msa_{\cX}=\cX \mV \mO \mV^{-1} \msa_{\cX}= \cP \mO \msa$ 
and thus
$\cS y=\sum_{\ell=0}^{\gamma}p_\ell\left(\int y dx\right)^\ell=\cP \mS \msa.$
\end{enumerate}
\end{proof}

Let $K(x,t)$ be a two-variable polynomial, or a two-variable polynomial approximation of a two- variable function. Then, for $\cP\big|_{x=t}=[P_0(t),P_1(t),\ldots]$ and $\mK\in\mathbb{R}^{n_x\times n_t}$,  
\begin{equation}\label{kernel}
K(x,t)=\sum_{i=0}^{n_x}\sum_{j=0}^{n_t}k_{i,j}P_i(x)P_j(t)=\cP \mK \cP^T \big|_{x=t}.
\end{equation}

\begin{lemma}\label{lemmaint}
For the integral operator $\int^{x}K(x,t)y(t)dt$, where $\int^{x}$ stands for the calculation of the integral at $x$, $K(x,t)=\sum_{i=0}^{n_x}\sum_{j=0}^{n_t}k_{ij}P_i(x)P_j(t)$ and $y(x)=\cP\msa$, one has
$\int^{x}K(x,t)y(t)dt=\cP \mS \msa,$
where 
$\mS=\sum_{i=0}^{n_x}\sum_{j=0}^{n_t}k_{ij}P_i(\mM)\mO P_j(\mM).$
\end{lemma}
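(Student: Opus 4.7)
The plan is to exploit linearity of the integral operator together with the algebraic translations of polynomial multiplication and integration already established in Lemma \ref{lemma:basischange} and Proposition \ref{prop:DopEop}. Since the kernel is a finite double sum $K(x,t)=\sum_{i,j}k_{ij}P_i(x)P_j(t)$, it suffices to prove the matrix identity for a single term $k_{ij}P_i(x)P_j(t)$ and then add the results; the outer coefficients $k_{ij}$ pass through unchanged.

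For a single term, I would decompose the action on $y$ into three stages carried out from the inside out. First, the product $P_j(t)y(t)$: since $ty=\cP\mM\msa$, induction on the degree (iterating the multiplication rule and using linearity in the polynomial) gives $P_j(t)y(t)=\cP(t)P_j(\mM)\msa$, so the coefficient vector in the basis $\cP$ is transformed by $P_j(\mM)$. Second, integration in the $t$ variable: by Lemma \ref{lemma:basischange}, applying $\int^{x}\cdot\,dt$ to a series $\cP(t)\msb$ produces $\cP(x)\mO\msb$, hence $\int^{x}P_j(t)y(t)\,dt=\cP(x)\mO\,P_j(\mM)\msa$. Third, the outer multiplication by $P_i(x)$: the same polynomial-multiplication rule now applied in $x$ yields $P_i(x)\cdot\cP(x)\mO P_j(\mM)\msa=\cP(x)P_i(\mM)\mO P_j(\mM)\msa$.

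Summing over $i,j$ with coefficients $k_{ij}$ produces
\[
\int^{x}K(x,t)y(t)\,dt=\cP\Bigl(\sum_{i=0}^{n_x}\sum_{j=0}^{n_t}k_{ij}P_i(\mM)\mO P_j(\mM)\Bigr)\msa=\cP\mS\msa,
\]
which is the claimed identity.

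The only delicate point, and the step I would write out most carefully, is the ordering in $P_i(\mM)\mO P_j(\mM)$: the matrices $\mM$ and $\mO$ do not commute, and the correct order is dictated by the temporal order of operations (first multiply the integrand by $P_j(t)$, then integrate in $t$, then multiply the resulting function of $x$ by $P_i(x)$). Verifying that $P_j(t)y(t)$ really has coefficient vector $P_j(\mM)\msa$ in the basis $\cP$ — equivalently, that the linear map "multiplication by $P_j$" is represented in $\cP$ by $P_j(\mM)$ — is a short induction on $\deg P_j$ starting from the base case provided by Lemma \ref{lemma:basischange}; everything else is bookkeeping.
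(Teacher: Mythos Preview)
Your proof is correct and follows the same overall structure as the paper's: reduce by linearity to a single kernel term $P_i(x)P_j(t)$, then factor the action as ``multiply by $P_j(t)$, integrate in $t$, multiply by $P_i(x)$'' and translate each step via $\mM$ and $\mO$, paying attention to the order of the noncommuting factors.

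The one difference is in how the fact ``multiplication by $P_j$ is represented by $P_j(\mM)$'' is justified. The paper first establishes the monomial case
\[
\int^{x}x^it^jy(t)\,dt=\cX(\mV\mM^i\mV^{-1})(\mV\mO\mV^{-1})(\mV\mM^j\mV^{-1})\mV\msa=\cP\,\mM^i\mO\mM^j\msa
\]
by passing through the power basis and the similarity relations $\mM=\mV^{-1}\mM_\cX\mV$, $\mO=\mV^{-1}\mO_\cX\mV$, and then obtains the $P_i,P_j$ case by linearity. You instead stay in the orthogonal basis throughout and argue by induction on the degree, starting from $t\,y=\cP\mM\msa$. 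Your route is slightly more direct and, incidentally, more in keeping with the paper's later theme of avoiding $\mV$ and similarity transformations; the paper's route has the advantage of making the connection to the power-basis operators $\mM_\cX,\mO_\cX$ explicit.
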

\begin{proof}
Using Lemma \ref{lemma:basischange} one can deduce that
\begin{eqnarray*}
\label{intBONP}
\int^{x}x^it^jy(t)dt&=&\cX(\mV\mM^i\mV^{-1})(\mV\mO\mV^{-1})(\mV\mM^j\mV^{-1})\mV\msa\nonumber \\
&=&\cP\mM^i\mO\mM^j\msa,
\end{eqnarray*}
and therefore
\begin{equation*}\label{intBONO}
\int^{x}P_i(x)P_j(t)y(t)dt=\cP P_i(\mM)\mO P_j(\mM)\msa.
\end{equation*}
\end{proof}

\begin{lemma}\label{lemmavolterra}
$\int_{x_0}^{x}P_i(x)P_j(t)y(t)dt=\cP(P_i(\mM) -\mse_{i+1}\cP\big|_{x=x_0})\mO P_j(\mM)\msa,$
where $\mse_{i+1}$ is the $(i+1)$th column of the identity matrix.
\end{lemma}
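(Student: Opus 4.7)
The plan is to reduce the definite Volterra-style integral to a difference of two indefinite integrals, apply Lemma \ref{lemmaint} to the first one, and then rewrite the boundary term as a rank-one matrix acting on the coefficient vector. Concretely, I would start from the fact that $P_i(x)$ does not depend on the integration variable $t$, so
\[
\int_{x_0}^{x}P_i(x)P_j(t)y(t)\,dt \;=\; P_i(x)\!\int_{x_0}^{x}P_j(t)y(t)\,dt \;=\; P_i(x)\bigl[F(x)-F(x_0)\bigr],
\]
where $F(x)=\int^{x}P_j(t)y(t)\,dt$ denotes the antiderivative at $x$ produced by $\mO$.

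Next, I would invoke Lemma \ref{lemmaint} (in the degenerate case with the left polynomial equal to $1$) to get $F(x)=\cP\,\mO\,P_j(\mM)\msa$. Multiplying by $P_i(x)$ and using the operator identity $P_i(x)\cP\msb=\cP\,P_i(\mM)\msb$ for any coefficient vector $\msb$ (which is the exact content of Lemma \ref{lemma:basischange} iterated through $p_k(\mM)$), the first contribution becomes $\cP\,P_i(\mM)\,\mO\,P_j(\mM)\msa$, which is precisely the expression given by Lemma \ref{lemmaint}.

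The boundary contribution $-P_i(x)F(x_0)$ is the one that needs a little packaging. Evaluating $F$ at $x_0$ gives the \emph{scalar} $F(x_0)=\cP\big|_{x=x_0}\mO\,P_j(\mM)\msa$, and writing $P_i(x)$ as the selection $P_i(x)=\cP\,\mse_{i+1}$ turns the product of a scalar by $P_i(x)$ into
\[
P_i(x)\,F(x_0)=\cP\,\mse_{i+1}\bigl(\cP\big|_{x=x_0}\mO\,P_j(\mM)\msa\bigr)=\cP\bigl(\mse_{i+1}\,\cP\big|_{x=x_0}\bigr)\mO\,P_j(\mM)\msa,
\]
where the crucial step is reinterpreting the product of the column $\mse_{i+1}$ with the row $\cP\big|_{x=x_0}$ as the outer-product matrix that then sits inside the $\cP(\,\cdot\,)\msa$ sandwich. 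Subtracting this from the indefinite-integral term and factoring $\cP$ on the left and $\mO\,P_j(\mM)\msa$ on the right yields the claimed formula.

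The only subtle point, and the one I would expect to be the main obstacle, is keeping careful track of what is a scalar, what is a row, and what is a column: the identity $P_i(x)\cdot c=\cP(\mse_{i+1}\,\cP|_{x=x_0})\mO P_j(\mM)\msa$ hinges on recognising that replacing a scalar constant by the rank-one matrix $\mse_{i+1}\,\cP|_{x=x_0}$ is legitimate precisely because $\cP|_{x=x_0}\mO P_j(\mM)\msa$ is already a scalar. Once this is phrased cleanly, the remainder is a direct application of Lemma \ref{lemmaint} and elementary linearity of the integral.
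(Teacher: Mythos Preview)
Your proposal is correct and follows essentially the same route as the paper: split the definite integral into the antiderivative at $x$ minus its value at $x_0$, invoke Lemma~\ref{lemmaint} for the first piece, and repackage the boundary term via the rank-one matrix $\mse_{i+1}\cP\big|_{x=x_0}$. The only cosmetic difference is that you factor $P_i(x)$ out of the integral before applying Lemma~\ref{lemmaint}, whereas the paper applies Lemma~\ref{lemmaint} to the full integrand $P_i(x)P_j(t)y(t)$ and then evaluates; your ordering makes the scalar/row/column bookkeeping for the boundary term more transparent, which is exactly the ``subtle point'' you flagged.
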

\begin{proof}
From Lemma \ref{lemmaint}
	\begin{eqnarray}\label{}
	\int_{x_0}^{x}P_i(x)P_j(t)y(t)dt&=&\cP P_i(\mM) \mO P_j(\mM) \msa - \cP\big|_{x=x_0} P_i(\mM) \mO P_j(\mM) \msa \nonumber\\ 
	&=&\cP(P_i(\mM) -\mse_{i+1}\cP\big|_{x=x_0})\mO P_j(\mM)\msa. \nonumber 
	\end{eqnarray}
	Note that it is easy to understand that $\cX\big|_{x=x_0}\mM_\cX^i=\mse_{i+1}\cX\big|_{x=x_0}$.
\end{proof}

\begin{lemma}\label{lemmafredholm}
$\int_{a}^{b}P_i(x)P_j(t)y(t)dt=\cP \mse_{i+1}(\cP\big|_{x=b} -\cP\big|_{x=a})\mO P_j(\mM)\msa.$
\end{lemma}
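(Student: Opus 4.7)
The plan is to exploit the fact that $P_i(x)$ does not depend on the integration variable $t$, so it factors out of the integral as a scalar multiplier, reducing the problem to a one-dimensional definite integral of $P_j(t)y(t)$ that was already handled by the previous lemmas.

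First, I would write
$$\int_{a}^{b}P_i(x)P_j(t)y(t)dt = P_i(x)\int_{a}^{b}P_j(t)y(t)dt,$$
since $P_i(x)$ is constant with respect to $t$.

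Second, I would apply Lemma \ref{lemmaint} in the special case $i=0$, i.e., with $P_0\equiv 1$ so that $P_0(\mM)=I$. This yields
$$\int^{x}P_j(t)y(t)dt = \cP\,\mO\,P_j(\mM)\,\msa,$$
and evaluating this antiderivative between $a$ and $b$ produces the scalar
$$\int_{a}^{b}P_j(t)y(t)dt = \bigl(\cP\big|_{x=b}-\cP\big|_{x=a}\bigr)\,\mO\,P_j(\mM)\,\msa,$$
exactly as in the deduction of Lemma \ref{lemmavolterra}.

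Third, I would recognize the prefactor $P_i(x)$ as $\cP\,\mse_{i+1}$, because multiplying the row vector $\cP=[P_0,P_1,\ldots]$ on the right by the $(i{+}1)$th column of the identity simply selects its $(i{+}1)$th entry. Inserting this identification in front of the previous display and rearranging the scalar in the middle gives the claimed identity. I do not anticipate any real obstacle: the statement is essentially a corollary of Lemma \ref{lemmaint}, and the only point requiring a moment of care is to track that $\cP\,\mse_{i+1}$ is a scalar (a $1\times 1$ block) that commutes with the scalar right-hand factor, so the expression $\cP\,\mse_{i+1}\bigl(\cP|_{x=b}-\cP|_{x=a}\bigr)\mO P_j(\mM)\msa$ is unambiguous and agrees with $P_i(x)$ times the definite integral computed above.
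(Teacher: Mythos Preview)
Your argument is correct and is essentially a fully written-out version of what the paper dismisses in one line as ``immediate since it is a particular case of Lemma~\ref{lemmavolterra}.'' The only minor difference is that you go back to Lemma~\ref{lemmaint} (the antiderivative formula) rather than invoking Lemma~\ref{lemmavolterra} itself, but the substance---factor out $P_i(x)=\cP\,\mse_{i+1}$ and evaluate the antiderivative of $P_j(t)y(t)$ between the constant limits $a$ and $b$---is identical.
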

\begin{proof}
Immediate since it is a particular case of Lemma \ref{lemmavolterra}. 
\end{proof}

\begin{proposition}
The linear Volterra integral operator $\cS_V y = \int_{x_0}^{x}K(x,t)y(t)dt$ and the Fredholm integral operator $\cS_F y = \int_{a}^{b}K(x,t)y(t)dt$, with degenerate kernel $K(x,t)\approx\sum_{i=0}^{n_x}\sum_{j=0}^{n_t}k_{ij}P_i(x)P_j(t)$,  smooth and continuous, acting on $y$ have, respectively, the following algebraic representation
\begin{equation}\label{orth_volt}
\mS_V y=\sum_{i=0}^{n_x}\sum_{j=0}^{n_t}k_{ij}\left(P_i(\mM)-\mse_{i+1}\cP\big|_{x=x_0}\right)\mO P_j(\mM)\msa
\end{equation}
and
\begin{equation}\label{orth_fred}
\mS_F y=\sum_{i=0}^{n_x}\sum_{j=0}^{n_t}k_{ij}\mse_{i+1}\Big(\cP\big|_{x=b}-\cP\big|_{x=a}\Big)\mO P_j(\mM)\msa
\end{equation}
for $\mse_i$ the $i$th column of the identity matrix.
\end{proposition}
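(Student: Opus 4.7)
The plan is to observe that the proposition is essentially an immediate corollary of Lemmas~\ref{lemmavolterra} and~\ref{lemmafredholm}, combined with the linearity of the integral operator and the degenerate kernel expansion in~\eqref{kernel}. Since $K(x,t)\approx\sum_{i=0}^{n_x}\sum_{j=0}^{n_t}k_{ij}P_i(x)P_j(t)$ is a finite bilinear combination of products $P_i(x)P_j(t)$, the integral operator acting on $y$ splits into a finite sum of elementary integrals of the type handled in the previous lemmas.

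First, for the Volterra case, I would write
$$
\cS_V y = \int_{x_0}^{x} K(x,t)\,y(t)\,dt = \sum_{i=0}^{n_x}\sum_{j=0}^{n_t} k_{ij} \int_{x_0}^{x} P_i(x)P_j(t)\,y(t)\,dt,
$$
pulling the coefficients $k_{ij}$ and the factor $P_i(x)$ (which is independent of $t$) outside each integral. I would then apply Lemma~\ref{lemmavolterra} term by term, replacing $\int_{x_0}^{x}P_i(x)P_j(t)y(t)\,dt$ by $\cP\bigl(P_i(\mM)-\mse_{i+1}\cP\big|_{x=x_0}\bigr)\mO P_j(\mM)\msa$, and factor out the common left factor $\cP$ and right factor $\msa$ to obtain~\eqref{orth_volt}.

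Next, for the Fredholm case, I would repeat the same argument but using Lemma~\ref{lemmafredholm}: after splitting the double sum, each term $\int_{a}^{b}P_i(x)P_j(t)y(t)\,dt$ is replaced by $\cP\mse_{i+1}\bigl(\cP\big|_{x=b}-\cP\big|_{x=a}\bigr)\mO P_j(\mM)\msa$, and linearity of $\cS_F$ then yields~\eqref{orth_fred}.

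There is no real obstacle here: the hard work of representing a single monomial-in-$(x,t)$ kernel integrand algebraically was already carried out in Lemmas~\ref{lemmaint}, \ref{lemmavolterra} and~\ref{lemmafredholm}; the only thing to check is that the sums can be interchanged with the integral (trivial, since they are finite) and that the common factors $\cP$ on the left and $\msa$ on the right can be pulled out of the summation, which is immediate because neither depends on the indices $i,j$. The proof is therefore essentially a one-line invocation of the two lemmas.
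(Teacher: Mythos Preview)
Your proposal is correct and follows exactly the same approach as the paper, which simply states that \eqref{orth_volt} and \eqref{orth_fred} are immediate consequences of Lemmas~\ref{lemmavolterra} and~\ref{lemmafredholm}, respectively. Your write-up is in fact more detailed than the paper's one-line invocation of these lemmas.
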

\begin{proof}
Equation \eqref{orth_volt} can be immediately obtained from Lemma \ref{lemmavolterra} (see e.g. \cite{SaTaMa13}) and equation \eqref{orth_fred} from Lemma \ref{lemmafredholm} (see e.g. \cite{HoSh03}).
\end{proof}

\section{The tau method for integro-differential problems}\label{sec:tau}

An approximate polynomial solution $y_n$ for the linear integro-differential problem 
\begin{equation}
  \begin{cases}\label{Gy=f}
  \cD y+\cS y+\cS_V y+\cS_F y=f \\
  c_i(y)=s_i,\, i=1,\ldots,\nu
  \end{cases},
\end{equation}
is obtained in the tau sense by solving a perturbed system 
\begin{equation}
  \begin{cases}\label{Gyn=f+taun}
  \cD y_n+\cS y_n+\cS_V y_n+\cS_F y_n=f+\tau_n \\
  c_i(y_n)=s_i,\, i=1,\ldots,\nu
  \end{cases},
\end{equation}
where $f$ is a $\lambda$th degree polynomial (or a polynomial approximation of a function), $\tau_n$ is the residual and $c_i(y)=s_i$, $i=1,\ldots,\nu$ the initial and/or boundary conditions. 

Problem \eqref{Gy=f} has a matrix representation given by
\begin{equation}\label{Ta=b}
  \begin{cases}
  \mC \msa=\mss \\
  (\mD+\mS+\mS_V+\mS_F) \msa=\msf
  \end{cases},
\end{equation}
where 
$\mC=[c_{ij}]_{\nu\times\infty},\quad c_{ij}=c_i(P_{j-1}),\, i=1,\ldots,\nu,\, j=1,2,\ldots$, 
$\msa=[a_0,a_1,\ldots]^T$ the coefficients of $y$ in $\cP$, $\mss=[s_1,\ldots,s_\nu]^T$, 	
$\mD$, $\mS$, $\mS_V$ and $\mS_F$ as defined in, respectively, \eqref{Dopy}-\eqref{orth_fred}, and $\msf=[f_0,\ldots,f_\lambda,0,0,\ldots]^T$ the right hand side of the system.

Choosing an integer $n\geq \nu+\lambda$, an $(n-1)$th degree polynomial approximate solution $y_n=\cP_n \msa_n$ is obtained by truncating system \eqref{Ta=b} to its first $n$ columns. Moreover, restricting this system to its first $n+\nu+h$ equations, a linear system
of dimension $n\times n$ is obtained, which is equivalent to introduce a polynomial residual 
\begin{equation}
\label{eq:taun_analitic}
\tau_n= (\cD y - \cD y_n) + (\cS y- \cS y_n) + (\cS_V y- \cS_V y_n) + (\cS_F y- \cS_F y_n). 
\end{equation}

\section{Nonlinear approach for integro-differential problems}\label{sec:nonlinear}
Nonlinear differential problems are tackled with linear approximations and solving a set of linear problems. 

Let $G$ be the nonlinear operator acting on an appropriate space of smooth functions
\begin{equation} \label{Geq0}
G(y^{(-\gamma)},\ldots,y^{(-1)},y^{(0)},y^{(1)},\ldots,y^{(\nu)})=0,
\end{equation}
where $\displaystyle y^{(\ell)}=\frac{d^{\ell}y}{dx^{\ell}}$ for $\ell\in \mathbb{Z}^{+}$, $\displaystyle y^{(\ell)}=(\int ydt)^{\ell}$ for $\ell\in \mathbb{Z}^{-}$ and includes Volterra and Fredholm terms.
	
If $G$ is $\mathbb{C}^1$ in a neighborhood $\Omega$ of $\omega=(y^{(-\gamma)},\ldots,y^{(-1)},y^{(0)},y^{(1)},\ldots,y^{(\nu)})$ and if $\omega_0 \in \Omega$ is an approximation of $\omega$, then a linear operator $T$ can be defined, represented by the order one Taylor polynomial centered at $\omega_0$ 
\begin{equation*} \label{T}
T(\omega)=G(\omega_0)+\sum_{i=-\gamma}^{\nu} \frac{\partial G}{\partial y^{(i)}}|_{\omega_0}(y^{(i)}-y_0^{(i)})
\end{equation*}

As in the Newton method for algebraic equations, we can replace $G$ by $T$ in \eqref{Geq0} and solve the approximated equation
\begin{equation} \label{Teq0}
\sum_{i=-\gamma}^{\nu} \frac{\partial G}{\partial y^{(i)}}|_{\omega_0} y^{(i)} = -G(\omega_0) + \sum_{i=-\gamma}^{\nu} \frac{\partial G}{\partial y^{(i)}}|_{\omega_0} y_0^{(i)}.
\end{equation}

Applying the Tau method to the linear differential equation \eqref{Teq0} and taking $\omega_1=(y_1^{(-\gamma)},\ldots,y_1^{(\nu)})$ as the solution, if $\omega_1\in \Omega$ we can repeat the process, obtaining an iterative procedure, solving for $\omega_k$ the linear differential equation
\begin{equation*} \label{Tk}
\sum_{i=-\gamma}^{\nu} \frac{\partial G}{\partial y^{(i)}}|_{\omega_{k-1}} y_k^{(i)} = - G(\omega_{k-1}) + \sum_{i=-\gamma}^{\nu} \frac{\partial G}{\partial y^{(i)}}|_{\omega_{k-1}} y_{k-1}^{(i)},\quad k=1,2,\ldots.
\end{equation*}

\section{Contributions to stability}\label{sec:contributions}

In this section we summarize some of the mathematical techniques developed for the tau method to provide stable algorithms for the \tautoolbox library. 

Let $\cP=[P_0(x),P_1(x),\ldots]$ be an orthogonal basis satisfying $xP_j=\alpha_jP_{j+1}+\beta_jP_j+\gamma_jP_{j-1},\, j\geq 0,\, P_0=1,\, P_{-1}=0$.

\hfill \newline
\noindent \textbf{Orthogonal evaluation:}
If $\cP^*$ are the corresponding orthogonal polynomials shifted to $[a,\ b]$ and $x$ is a vector, then the evaluation of $y_n(x)=\sum_{i=0}^n a_iP^*_i$ is directly computed in $\cP^*$ by the recursive relation
\begin{equation*}
\begin{cases}
P^*_0=[1,\ldots,1]^T \\
P^*_1=\frac{c_1 x + (c_2 -\beta_0)P^*_0}{\alpha_0} \\
P^*_i=\frac{(c_1 x+\msc_2-\beta_{i-1}P^*_0)\odot P^*_{i-1}-\gamma_{i-1}P^*_{i-2}}{\alpha_{i-1}},\quad i=2,3,\ldots,n
\end{cases}
\end{equation*}
where $\odot$ is the element-wise product of two vectors, $c_1 = \frac{2}{b-a}$ and $c_2=\frac{a+b}{a-b}$.

The \tautoolbox proposes a \orthval function, instead of the \polyval \matlab one, to implement this functionality. 

\hfill \newline
\noindent \textbf{Change of basis by recurrence:}
Let $\mV$ satisfy $\msa_\cX=\mV\msa$. The coefficients of $\mW=\mV^{-1}$ can be computed without inverting $\mV$ by the recurrence relation
\begin{equation}\label{propW}
	\begin{cases}
	\msw_{1}=\mse_1 \\
	\msw_{j+1}=\mM\msw_{j},\quad j=1,2,...
	\end{cases},
\end{equation}
where $\mM$ is such that $\cP x=\cP \mM$, $\msw_j$ is the $j$th column of $\mM$ and $\mse_1$ the first column of the identity matrix.

\hfill \newline
\noindent \textbf{Avoiding similarity transformations:}
Matrix inversion presented at all similarity transformations must be avoided to ensure numerically stable computations. Recurrence relations to compute the elements of matrices $\mM$, $\mN$ and $\mO$ directly on $\cP$ can be computed, respectively, by 
	\begin{equation*}\label{M}
	\mM=\left[\mu_{i,j}\right]_{i,j=1}^{n}=
	\begin{cases}
	\mu_{j+1,j}=\alpha_{j-1},\quad \mu_{j,j}=\beta_{j-1}, \quad \mu_{j-1,j}=\gamma_{j-1}\\
	\mu_{i,j}=0,\quad |i-j|>1 
	\end{cases},
	\end{equation*}
	\begin{equation*}\label{N}
	\mN=\left[\eta_{i,j}\right]_{i,j=1}^{n}=
	\begin{cases}
	\eta_{i,j+1}=\frac{\alpha_{i-1}\eta_{j,i-1}+(\beta_i-\beta_j)\eta_{j,i}+\gamma_{i+1}\eta_{j,i+1}-\gamma_j\eta_{j-1,i}}{\alpha_j}\\
	\eta_{j,j+1}=\frac{\alpha_{j-1}\eta_{j,j-1}+1}{\alpha_j}\\
	\end{cases}, \text{ and }		
	\end{equation*}
	\begin{equation*}\label{O}
	\mO=\left[\theta_{i,j}\right]_{i,j=1}^{n}=
	\begin{cases}
	\theta_{j+1,j} = \frac{\alpha_j}{j+1}\\
	\theta_{i+1,j}=-\frac{a_i}{i+1}\sum_{k=i+2}^{j+1}\eta_{i,k}\theta_{k,j},\quad i =j-1,\ldots,1,0
	\end{cases}.
	\end{equation*}

\hfill \newline
\noindent \textbf{Linearization coefficients:}
Product of polynomials $\msp$ and $\msq$ in $\cP$ occurs in nonlinear problems. 
Usually both polynomials are translated first from $\cP$ to $\cX$, then the convolution is applied and finally the product is translated back, $\msp \msq = \mV^{-1}\conv (\mV \msp, \mV \msq)$.
Alternatively, ensuring robustness, this product can be directly computed on $\cP$ using the linearization coefficients: $P_iP_j=\sum_{k=0}^{i+j} l_{i,j,k} P_k$, where $l_{i,j,k}$ are computed by recurrence relations \cite{Le96}.

For $\msp=\cP \msa$ and $\msq=\cP \msb$, of degree $n$, then $\msp \msq=\cP \msc$ where
\[ \begin{cases}		
c_k = {\displaystyle \sum_{j=0}^{k} \sum_{i=\lfloor\frac{k+1-j}{2}\rfloor}^{n-j} (\frac{1}{2})^{\delta_{i,j}}(a_ib_j+a_jb_i)l_{i,i+j,k}}, \quad  k=0,\ldots,n\\
c_{n+k} = {\displaystyle \sum_{j=0}^{n-k} \sum_{i=\lfloor\frac{n-k-j}{2}\rfloor}^{n-j} (\frac{1}{2})^{\delta_{i,j}}(a_ib_j+a_jb_i)l_{i,i+j,n+k}}, \quad  k=1,\ldots,n		
\end{cases}. \]

\hfill \newline
\noindent \textbf{Computing with $\mM$:} An efficient way to compute the powers of $M$ is 
$$\mM^{k} = [\mu_{i,j}^{(k)}],\quad \mu_{i,j}^{(k)} = \mu_{i-1,j}^{(k-1)}\alpha_{i-1}+\mu_{i,j}^{(k-1)}\beta_i+\mu_{i+1,j}^{(k-1)}\gamma_{i+1}.$$ 

Moreover, the evaluation of $y_n(\mM)=\sum_{i=0}^n a_{n,i} P_i(\mM)$ can be performed with $$P_{j+1}(\mM)=\frac{(\mM - \beta_j I)P_j(\mM) - \gamma_jP_{j-1}(\mM)}{\alpha_j},\quad j\geq 0,$$
 $P_0(\mM)=I$ and $P_{-1}(\mM)=\varnothing$.

\section{Numerical results}\label{sec:results}
\begin{example}
Consider the integro-differential Fredholm nonlinear equation of the second kind \cite{DeSa12}, with exact solution $y(x)=\exp(-x)$,
	\begin{equation}
	\displaystyle\frac{d}{dx}y(x)+y(x)-\int_{0}^{1}y(t)^2dt=0.5(e^{-2}-1),\quad y(0) =1.
	\label{non_lin_ideF}
	\end{equation}	
\end{example}
Introducing new variables $y_1 = y$ and $y_2 = y_1^2$, we get $\frac{d}{dx}y_2 = 2y_1\frac{d}{dx}y_1$. Linearizing it results $ y_1\frac{d}{dx}y_1\approx y_1^{(k)}\frac{d}{dx}y_1 + \frac{d}{dx}y_1^{(k)}y_1 -y_1^{(k)}\frac{d}{dx}y_1^{(k)}$ and, therefore, problem \eqref{non_lin_ideF} can be casted as
	\begin{equation}
	\begin{cases}
	\displaystyle\frac{d}{dx}y_1^{(k+1)}+y_1^{(k+1)}-\int_{0}^{1}y_2^{(k+1)}dt=0.5(e^{-2}-1)\\
	\displaystyle\frac{d}{dx}y_2^{(k+1)}-2\left(y_1^{(k)}\frac{d}{dx}y_1^{(k+1)}+y_1^{(k+1)}\frac{d}{dx}y_1^{(k)}\right)=-2y_1^{(k)}\frac{d}{dx}y_1^{(k)}\\
	y_1^{(1)}(0)=1,\quad y_2^{(1)}(0)=1
	\end{cases},
	\end{equation}
	and the coefficient matrix (ChebyshevT basis) of the linear system $\mT_\cT\msa_\mathcal{T}=\msb_\cT$ is     
	\begin{equation*}{\sf T}_\mathcal{T}=
	\begin{bmatrix}
	T_0(0)\quad \ldots\quad T_{n-1}(0) & \bf 0 \\
	\bf 0 & T_0(0)\quad \ldots\quad T_{n-1}(0) \\
	\mM_\cT+\sf I& -\mS_{F_\cT} \\
	-2\left[y_1^{(k)}(\mM_\cT)\mN_\cT+{y_1^{(k)}}'(\mM_\cT){\sf I}\right]&\mN_\cT \\
	\end{bmatrix},
	\end{equation*}
	where $\mS_{F_\cT}$ by \eqref{orth_fred}, making use of the \tautoolbox \fred function. The independent vector is $\msb_\cT=\left[1,\ 1,\ 0.5(e^{-2}-1),\ 0,\ \ldots,\ 0,\ -2y_1^{(k)}\frac{d}{dx}y_1^{(k)}\right]^T.$
	
The error presented in \cite{DeSa12} is $||\mse||_\infty=\max_j|y_n(x_j)-y(x_j)|$, and therefore the same measure is applied to compare the results, see Table \ref{aVandW}.
	\begin{table}[htb]
		\centering
		\begin{tabular*}{0.75\textwidth}{@{\extracolsep{\fill}} r r r r r}			\hline
			& \multicolumn{2}{c}{$\cite{DeSa12}$} & \multicolumn{2}{c}{\tautoolbox} \\
			$n$& $||{\sf e}||_{\infty}$ & CPU time & $||{\sf e}||_{\infty}$ & CPU time \\
			\hline
			5&  	9.63e-04&    0.42&   1.58e-04& 	0.03\\
			9& 	1.28e-04&    0.58&   1.28e-09& 	0.03\\
			17& 	2.87e-05&    0.73&   7.77e-16& 	0.04\\
			33& 	5.61e-06&    0.07&   4.44e-16& 	0.07\\
			65& 	2.39e-06&    1.54&   4.44e-16& 	0.37\\
			129& 	1.28e-06&    2.15&   4.44e-16& 	2.35\\
			\hline
		\end{tabular*}
		\caption{Comparison between the results at \cite{DeSa12} and with \tautoolbox.}
		\label{aVandW}
	\end{table}

For all polynomial degree approximations the approximate solution provided by \tautoolbox is clearly better than the one given by \cite{DeSa12}. The error for $n=129$ with \cite{DeSa12} was reached with \tautoolbox with a polynomial degree smaller than $9$ and for degree $17$ the  \tautoolbox was able to find the solution with machine precision order. Noteworthy is that for increasing polynomial degree the algorithm is stable, not showing any perturbations for higher degrees. The CPU time should not be compared between both approaches since results are reported from two distinct machines. It is however relevant to understand that the effort to solve the problem with \tautoolbox is higher: if we take $n=5$ as reference time then for $n=128$ \tautoolbox required $78.3\times$ the reference computational cost whereas \cite{DeSa12} necessitates $5.12\times$. Robustness and stability comes at a price: more elaborate mathematics must be performed to reach such quality results. Nevertheless, one must point out that the CPU time to compute an accurate approximation was still low, only $2.35$ seconds. 

\begin{example}
Consider now the system of integro-differential equations with nonlinear Volterra term \cite{AbTa09}, with exact solution $y_1(x)=\sinh(x)$ and $y_2(x)=\cosh(x)$,
	\begin{equation}
	\begin{cases}
	\displaystyle \frac{d}{dx}y_1(x)+\frac{1}{2}{{\left(\frac{d}{dx}y_2(x)\right)^2}}-\int_{0}^{x}(x-t)y_2(t)+{{y_2(t)y_1(t)}}dt=1\\
	\displaystyle
	\frac{d}{dx}y_2(x)-\int_{0}^{x}(x-t)y_1(t)-{{y_2^2(t)}}+{{y_1^2(t)}}dt=2x\\
	\displaystyle
	y_1(0)=0,\quad y_2(0)=1
	\end{cases}.
	\label{non_lin_ideV}
	\end{equation}
\end{example}

As for the previous example, linearization is done first and the Volterra integral term $\mS_V$, following \eqref{orth_volt}, is tackled using the \tautoolbox \volt function.

Fig. \ref{fig:example2} shows the error after 5 iterations along the interval $[0,1]$. For $n=20$, \tautoolbox was able to provide an approximate solution with machine precision all over the interval. For comparison purposes, results for the same problem in \cite{AbTa09} with $n=10$ are plotted together with \tautoolbox library. The former, at the right part of the interval, can only reach single-precision accuracy, in contrast with the latter, which delivers double precision accuracy. For $n\ge25$, \tautoolbox reaches machine precision.  

\begin{figure}[H]
\centering
\includegraphics[width=1.0\textwidth]{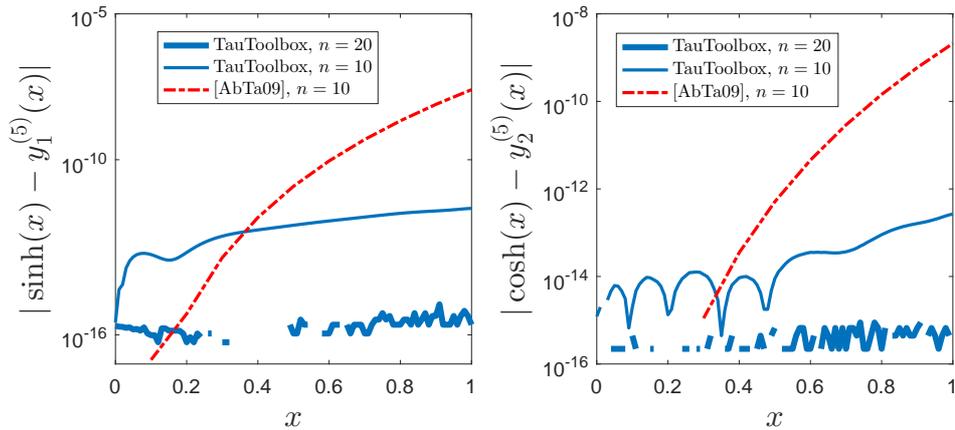}
\caption{Comparison between \cite{AbTa09} and \tautoolbox for $10$ (and $20$) degree polynomial.}
\label{fig:example2}
\end{figure}

\section{Conclusions}\label{sec:conclusion}
In this work we proposed the Lanczos' tau method for nonlinear integro-differential systems of equations. Contributions to improve the stability of the numerical implementation are presented.  Numerical experiments illustrate the accuracy and efficiency of the new proposal, when compared with the results in the literature. 

All these contributions are included at \tautoolbox -- a \matlab library for the solution of integro-differential problems.



\end{document}